\documentclass[oneside,10pt]{article}          
\usepackage[b5paper]{geometry}	    
\usepackage{amsfonts,amsmath,latexsym,amssymb} 
\usepackage{theorem}
\usepackage{mathrsfs,upref}         
\usepackage{mathptmx}
		    

\newtheorem{theorem}{Theorem}           
\newtheorem{lemma}{Lemma} 
\newtheorem{proposition}{Proposition}              
\newtheorem{corollary}{Corollary}
\theoremstyle{definition}
\newtheorem{definition}{Definition}
\newtheorem{example}{Example}

\newtheorem{proof}{Proof}
\newtheorem{remark}{Remark}

\newcommand{\hil}{\mathcal{H}}
\newcommand{\angf}[2]{c\left(\,#1,\,#2\,\right)}
\newcommand{\pint}[2]{\displaystyle \left \langle #1,#2 \right\rangle}
\def\cB{\mathcal{B}}
\def\cF{\mathcal{F}}
\def\cG{\mathcal{G}}
\def\cW{\mathcal{W}}

\DeclareMathOperator{\dist}{dist}

\date{}
\begin{document}

\title{On perturbations of woven pairs of frames}

\author{P. Calder\'on \& M. Ruiz}
\maketitle

\begin{abstract}
 In this note, we prove some results related to  small perturbations of a frame for a Hilbert space $\hil$ in order to have a woven pair for $\hil$. Our results complete those known in the literature. In addition we study a necessary condition for a woven pair, that resembles a characterization for Riesz frames.
\end{abstract}

\section{Introduction and preliminaries}
Woven families of frames were introduced in \cite{BCGLL} motivated by  a problem of distributed signal processing in which the pre-processing of a signal is performed by a family of frames that correspond to  a wireless sensor network. The purpose is to  have some robustness in the reconstruction of a signal independently of the set of  measurements obtained at each node or sensor. Mathematically speaking, the idea is to have a family of frames $\{f_{ij}\}_{i\in I, j\in I_n}$ for a separable Hilbert space $\hil$  such that, for every partition $\{\sigma_j\}_{j\in I_n}$ of $I$, the set $\{f_{ij}\}_{i\in \sigma_j, j\in I_n}$ is a frame for $\hil$.

The study of woven frames continued in the work \cite{CL} where the authors reviewed some basic results relating perturbations by invertible operators, projection of woven frames, and a weaving equivalent of unconditional sequences. Weaving frames were then generalized to the context of Banach frames (\cite{CFL}), K-frames (\cite{DeV2}), continuous frames (\cite{DeV1}),  fusion frames (\cite{DeVV}, \cite{NA}),  among others. The topic attracted the interest of researchers in frame theory, as is evident from the number of papers published in recent years.

Our purpose in this note is to study the notion of woven pairs in terms of the synthesis operators of the frames involved. From this perspective, simpler proofs of perturbation results can be achieved. We also use this setting to look for characterization of woven pairs in terms of the angle between the nullspace of some operator and a  family of ranges of obliques projections. 

\subsection{Frames and woven frames.}
 
In this section, we give a brief summary of frame theory and fix some notations used throughout the paper. Let $\hil$ be a separable (finite or infinite dimensional) Hilbert space. By $B(\hil)$, we mean the algebra of bounded linear operators on $\hil$. Given an operator $T\in B(\hil)$, we denote by $R(T)$ and $N(T)$ its range and nullspace,  respectively.  Finally, we shall denote  by $I$ an index set (finite or countably infinite) and by $I_m$ the finite index set $I_m=\{1,2,\ldots,m\}$, for $m\in \mathbb{N}$. 

\begin{definition}A family $\cF=\{f_i\}_{i\in I}$ of vectors in $\hil$ is called a {\it frame} for $\hil$ if there exist constants $A,B>0$ such that
\[ A\|x\|^2 \leq \sum_{i\in I}\, |\pint{x}{f_i}|^2\leq B\|x\|^2\, , \quad \text{ for every } x\in \hil.\]
If we have only the upper bound condition, we say that $\cF$ is a {\it Bessel sequence} for $\hil$.
The maximal  lower bound $A_\cF$ and the minimal upper bound $B_\cF$   are called the {\it optimal frame bounds} of $\cF$. 
\end{definition}
Let fix some orthonormal basis $\cB=\{e_i\}_{i\in I}$ for $\hil$. Then, one can associate some bounded linear operators to a Bessel sequence $\cF$. Namely, the {\it synthesis operator} $T_{\cF}\in B(\hil)$, defined by $T_{\cF}\,e_i=f_i$, the {\it analysis operator} $T_{\cF}^*$, which is the adjoint of $T_{\cF}$, and finally, the {\it frame operator}, $S_{\cF}=T_{\cF}T_{\cF}^*$.

The frame operator plays a major role in the reconstruction of a vector $f\in \hil$ from its frame coefficients $\{\pint{x}{f_i}\}_{i\in I}$: we define the {\it canonical dual} of a frame $\cF$ as the sequence $S_\cF^{-1}(\cF)=\{S_\cF^{-1}\,f_i\}_{i\in I}$.  We have then the {\it reconstruction formulas}:
\[x=\sum_{i\in I} \pint{x}{f_i}\, S_\cF^{-1}f_i=\sum_{i\in I}\pint{x}{S_\cF^{-1} f_i}\, f_i.\]

There are several results and frame features that can be stated in terms of these operators. In the following Proposition we list some of them
\begin{proposition}\label{equiv frames} Let $\cF=\{f_i\}_{i\in I}$ be a Bessel sequence in $\hil$, with synthesis operator $T_{\cF}$. Then, the following are equivalent:
\begin{enumerate}
\item $\cF$ is a frame for $\hil$.
\item $T_{\cF}$ is a surjective operator.
\item $S_{\cF}=T_{\cF}T_{\cF}^*$ is a positive invertible operator.
\end{enumerate}
Moreover, the (optimal) frame bounds can be computed as $A_\cF=\|T_{\cF}^\dagger\|^{-2}$, $B_\cF=\|T_{\cF}\|^2$, where $T_{\cF}^\dagger$ is the Moore-Penrose pseudoinverse of $T_{\cF}$ and $\|\cdot\|$ is the usual operator norm in $B(\hil)$.
\end{proposition}

\subsubsection*{Woven frames} 
Lets recall the definition of woven frames given in \cite{BCGLL}:
\begin{definition}
A family of frames $\cF_j=\{f_{ij}\}_{i\in I}$ for $j\in I_n$  for a Hilbert space $\hil$ is said to be {\it woven} if there are universal constants $A$ and $B$ such that for every partition $\{\sigma_j\}_{j\in I_n}$ of $I$, the family $\{f_{ij}\}_{i\in \sigma_j, \, j\in I_n}$ is a frame for $\hil$, with lower and upper frame bounds $A$ and $B$, respectively. Each frame $\cF_{\sigma}=\{f_{ij}\}_{i\in \sigma_j, \, j\in I_n}$ is called a {\it weaving}.

If we do not require the existence of the uniform frame bounds $A$, $B$  for all the weavings, we say that the family is {\it weakly woven}.
\end{definition}

In \cite{BCGLL}, the authors make an intensive study of woven frames.  
Among others, one of the most relevant results of that work states that weakly woven pairs are woven (\cite[Thm. 4.5]{BCGLL}):
\begin{theorem}\label{equiv ww and w}
Given two frames $\cF=\{f_i\}_{i\in I}$ and $\cG=\{g_i\}_{i\in I}$ for $\hil$, the following are equivalent:
\begin{itemize}
\item[(i)] The two frames are woven.
\item[(ii)] The two frames are weakly woven.
\end{itemize}
\end{theorem}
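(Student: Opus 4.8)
The implication (i) $\Rightarrow$ (ii) is immediate, so the content is (ii) $\Rightarrow$ (i), and my plan is to phrase everything through synthesis operators as the paper advertises. Fix the orthonormal basis $\cB=\{e_i\}_{i\in I}$; since $n=2$ a partition is a pair $\{\sigma_1,\sigma_2\}$ with $\sigma_2=I\setminus\sigma_1$, and I let $P_1,P_2$ be the orthogonal projections onto $\overline{\operatorname{span}}\{e_i:i\in\sigma_1\}$ and its complement. The synthesis operator of the weaving is $T_\sigma=T_\cF P_1+T_\cG P_2$, so $T_\sigma^*=P_1T_\cF^*+P_2T_\cG^*$, and because $R(P_1)\perp R(P_2)$ the cross terms vanish and give the clean identity
\[\|T_\sigma^* x\|^2=\|P_1T_\cF^* x\|^2+\|P_2T_\cG^* x\|^2=\sum_{i\in\sigma_1}|\pint{x}{f_i}|^2+\sum_{i\in\sigma_2}|\pint{x}{g_i}|^2.\]
By Proposition~\ref{equiv frames} the weaving is a frame exactly when $T_\sigma$ is onto, i.e. $T_\sigma^*$ is bounded below, with optimal lower bound $\inf_{\|x\|=1}\|T_\sigma^* x\|^2$. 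Thus (i) asks for $\inf_\sigma\inf_{\|x\|=1}\|T_\sigma^* x\|^2>0$, while (ii) only asks that the inner infimum be positive for each fixed $\sigma$.

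The uniform \emph{upper} bound is free: the displayed quantity is dominated termwise by $\sum_i|\pint{x}{f_i}|^2+\sum_i|\pint{x}{g_i}|^2\le(B_\cF+B_\cG)\|x\|^2$, so $B=B_\cF+B_\cG$ serves every partition at once. Everything therefore reduces to a uniform \emph{lower} bound, which I attack by contradiction. We may assume $I$ is countably infinite (for $I$ finite there are finitely many partitions and (ii) trivially gives (i)). If the uniform lower bound fails, I obtain partitions $\sigma^n$ and unit vectors $x_n$ with $\|T_{\sigma^n}^* x_n\|^2\to0$, and the goal becomes to build a single partition $\tau$ whose weaving is not bounded below; by the identity above this weaving is then not a frame, contradicting (ii).

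The construction is a splicing argument. Since $\cF$ and $\cG$ are Bessel, for each $n$ there is a finite window $E_n$ carrying all but a prescribed amount of the combined mass $\sum_i\big(|\pint{x_n}{f_i}|^2+|\pint{x_n}{g_i}|^2\big)\le B_\cF+B_\cG$. In the favorable situation where the $E_n$ can be taken pairwise disjoint along a subsequence $(n_k)$, I simply let $\tau$ coincide with $\sigma^{n_k}$ on $E_{n_k}$ (and arbitrarily elsewhere). Then on $E_{n_k}$ the $\tau$-weaving mass of $x_{n_k}$ equals its $\sigma^{n_k}$-weaving mass, which is $\le\|T_{\sigma^{n_k}}^* x_{n_k}\|^2\to0$, while off $E_{n_k}$ the total mass of $x_{n_k}$ is negligible by the tail estimate; hence $\|T_\tau^* x_{n_k}\|\to0$ with $\|x_{n_k}\|=1$, as required.

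The main obstacle is precisely that the windows $E_n$ need \emph{not} be disjoint: the mass of the $x_n$ may fail to escape to infinity and instead partly recur on a fixed finite set, so no uniform Bessel tail estimate is available over the bounded family $\{x_n\}$. I would resolve this by a single diagonal extraction producing two compatible ingredients. By Tychonoff compactness of $\{1,2\}^I$ in the product topology, pass to a subsequence along which $\sigma^n$ converges coordinatewise; on the recurrent indices this yields a limit partition $\tau_0$ that $\sigma^n$ eventually matches. On the complementary indices, which carry the escaping mass, the windows may be thinned to pairwise disjoint high-index blocks $G_k$ on which I keep $\tau=\sigma^{n_k}$. Splicing $\tau_0$ on the recurrent part with the $\sigma^{n_k}$ on the $G_k$ gives one partition $\tau$ that, for large $k$, agrees with $\sigma^{n_k}$ on a set carrying all but a vanishing fraction of the mass of $x_{n_k}$, so that again $\|T_\tau^* x_{n_k}\|\to0$. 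Reconciling the coordinatewise limit on the recurrent part with the disjoint windows on the escaping part inside a single extraction is the delicate heart of the argument; by contrast the surrounding algebra — the orthogonality killing the cross terms and the uniform upper bound — is routine.
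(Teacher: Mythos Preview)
The paper does not prove this theorem: it is quoted from \cite[Thm.~4.5]{BCGLL} in the preliminaries and thereafter used as a black box (for instance, in the last line of the proof of Theorem~3). There is therefore no in-paper argument against which to compare your proposal.

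On the proposal itself: the synthesis-operator reformulation, the identity $\|T_\sigma^* x\|^2=\|P_1T_\cF^* x\|^2+\|P_2T_\cG^* x\|^2$ coming from $R(P_1)\perp R(P_2)$, the free uniform upper bound $B_\cF+B_\cG$, and the reduction of the finite-$I$ case are all correct and routine. What remains is exactly the hard part, and there you have supplied a strategy rather than a proof. Product compactness of $\{1,2\}^I$ yields a coordinatewise limit $\tau_0$, but $\sigma^{n_k}$ is only guaranteed to agree with $\tau_0$ on each \emph{fixed} finite block, with the threshold depending on the block; meanwhile the windows $E_{n_k}$ carrying the mass of $x_{n_k}$ may grow and drift, so there is no a priori decomposition of $I$ into ``recurrent'' and ``escaping'' indices valid for the whole sequence. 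You explicitly flag this reconciliation as ``the delicate heart of the argument'' and leave it unexecuted, so the proposal is an outline, not a proof. If you want to complete it, the natural route is a diagonal construction that, at stage $k$, simultaneously forces $\sigma^{n_k}$ to match $\tau_0$ on $\{1,\dots,m_k\}$ and pushes all but $2^{-k}$ of the combined Bessel mass of $x_{n_k}$ either into $\{1,\dots,m_k\}$ or into a block $G_k$ disjoint from the earlier $G_j$; verifying that both demands can be met at once is where the real work lies.
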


We shall restrict ourselves to the study of {\it woven pairs} of frames $(\cF,\cG)$ for $\hil$. That is, $\cF=\{f_i\}_{i\in I}$ and $\cG=\{g_i\}_{i\in I}$ are frames for $\hil$ and  there exist $A,B>0$ such that, for every partition $\{\sigma,\sigma^c\}$ of $I$,
\[ A\|x\|^2 \leq \sum_{i\in \sigma}\, |\pint{x}{f_i}|^2+\sum_{i\in \sigma^c}\, |\pint{x}{g_i}|^2\leq B\|x\|^2\, , \quad \text{ for every } x\in \hil.\]
  
\section{Woven  pairs and perturbations.}
 Let $\cF=\{f_i\}_{i\in I}$ be a frame for $\hil$ with synthesis operator $T_{\cF}$. It is obvious that the pair $(\cF,\cF)$ is a woven pair for $\hil$. A natural  question that arises is to determine if the pair $(\cF,\cG)$ is a woven pair as long as   $\cG$ is a frame ``sufficiently close'' to $\cF$.

In \cite{BCGLL}, the authors answer this question by showing that for a small perturbation $\cG$  of $\cF$, the pair is woven (see \cite[Thm. 6.1]{BCGLL}).  Our purpose is to  refine that result, showing that for every Bessel sequence $\cG=\{g_i\}_{i\in I}$ whose synthesis operator $T_{\cG}$  lies in an appropriate neighborhood of $T_{\cF}$,  the pair $(\cF,\cG)$ is woven.

Before stating the main result, we need the following well known result, which can be seen as a particular case of \cite[Thm. 3.1]{DH} about perturbations of a closed range operator: 
\begin{lemma}\label{Neumann}
Let $A\in B(\hil)$ be a surjective operator, with Moore-Penrose pseudoinverse $A^\dagger$. Let $B\in B(\hil)$ such that $\|A-B\|<\|A^\dagger\|^{-1}$, then $B$ is also surjective, with $\|B^\dagger\|<\frac{1}{\|A^\dagger\|^{-1}-\|A-B\|}$.

\end{lemma}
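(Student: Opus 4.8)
The plan is to reduce the statement about surjectivity and the size of the pseudoinverse to a single lower-bound estimate on the \emph{adjoint}, and then push that estimate through the perturbation by a triangle inequality. The crucial observation is that for a surjective operator $A$ one has the identity
\[
\|A^\dagger\|^{-1} \;=\; \inf_{\|y\|=1}\|A^* y\|.
\]
Granting this, the heart of the argument is a one-line estimate, so the real content lies in establishing the identity and in converting a lower bound on $B^*$ back into surjectivity of $B$.

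First I would record the identity. If $A$ is surjective then $AA^*$ is a positive invertible operator (cf.\ Proposition \ref{equiv frames}) and the Moore--Penrose inverse is the right inverse $A^\dagger = A^*(AA^*)^{-1}$. A short computation gives $(A^\dagger)^* A^\dagger = (AA^*)^{-1}AA^*(AA^*)^{-1} = (AA^*)^{-1}$, so that
\[
\|A^\dagger\|^2 = \|(A^\dagger)^* A^\dagger\| = \|(AA^*)^{-1}\| = \bigl(\min \operatorname{spec}(AA^*)\bigr)^{-1}.
\]
Since $\min\operatorname{spec}(AA^*) = \inf_{\|y\|=1}\pint{AA^* y}{y} = \inf_{\|y\|=1}\|A^* y\|^2$, taking square roots yields the claimed identity (this is just the reduced minimum modulus of $A^*$).

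Next comes the perturbation step. Write $\gamma = \|A^\dagger\|^{-1}$ and $\delta = \|A-B\| = \|A^*-B^*\| < \gamma$. For any unit vector $y$ the triangle inequality gives
\[
\|B^* y\| \ge \|A^* y\| - \|(A^*-B^*)y\| \ge \gamma - \delta > 0,
\]
so $B^*$ is bounded below by $\gamma - \delta$. A bounded-below operator is injective with closed range; hence $R(B)$ is closed and $R(B)^\perp = N(B^*) = \{0\}$, forcing $R(B) = \hil$. Thus $B$ is surjective, and applying the identity to $B$ gives $\|B^\dagger\|^{-1} = \inf_{\|y\|=1}\|B^* y\| \ge \gamma - \delta$, that is,
\[
\|B^\dagger\|^{-1} \;\ge\; \|A^\dagger\|^{-1} - \|A-B\|,
\]
equivalently $\|B^\dagger\| \le \bigl(\|A^\dagger\|^{-1}-\|A-B\|\bigr)^{-1}$. (I read this as the correctly normalized form of the displayed estimate, the natural left-hand quantity being $\|B^\dagger\|^{-1}$ rather than $\|B^\dagger\|$.)

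The only step I expect to present a genuine obstacle is the passage from ``$B^*$ bounded below'' to ``$B$ surjective'': one must invoke closedness of the range (closed range theorem, or directly that a bounded-below operator has closed range) together with $N(B^*)=\{0\}$. Everything else is elementary. An alternative route, matching the cited reference \cite{DH}, would be to write $B = A - (A-B)$ and expand $B^\dagger$ through a Neumann-type series for $(I - (A-B)A^\dagger)^{-1}$ applied to $A^\dagger$; this reproves surjectivity and the norm bound at once, but requires more bookkeeping than the adjoint lower-bound argument above.
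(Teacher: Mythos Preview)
Your argument is correct. The paper does not actually prove this lemma: it is stated as a well-known fact and attributed to \cite{DH}, so there is no proof in the paper to compare against directly. The label ``Neumann'' and the cited reference point to the Neumann-series route you mention at the end (write $B=A(I-A^\dagger(A-B))$, invert the bracket, and read off surjectivity and the norm estimate). Your primary argument via the lower bound on $B^*$ is a cleaner and more elementary alternative: it avoids any series manipulation and isolates exactly the two facts needed, namely the identity $\|A^\dagger\|^{-1}=\inf_{\|y\|=1}\|A^*y\|$ for surjective $A$ and the closed-range theorem. Both approaches yield the same quantitative conclusion.

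Your reading of the displayed inequality is also right: as written, $\|B^\dagger\|<\|A^\dagger\|^{-1}-\|A-B\|$ is a typo. What the argument actually gives (and what the paper uses downstream in Theorem~\ref{epa}) is $\|B^\dagger\|^{-1}\ge \|A^\dagger\|^{-1}-\|A-B\|$, equivalently $\|B^\dagger\|\le\bigl(\|A^\dagger\|^{-1}-\|A-B\|\bigr)^{-1}$; the stated lower weaving bound $(A_\cF^{1/2}-\|T_\cF-T_\cG\|)^{-2}$ in Theorem~\ref{epa} should accordingly be $(A_\cF^{1/2}-\|T_\cF-T_\cG\|)^{2}$.
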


\begin{theorem}\label{epa}
Let  $\cF=\{f_i\}_{i\in I}$ be a frame for $\hil$ and  let  $\cG=\{g_i\}_{i\in I}$ be a Bessel sequence in $\hil$ with synthesis operators $T_{\cF}$ and $T_{\cG}$,  respectively. If $A_{\cF}$ is the optimal lower frame bound for $\cF$ and  $||T_{\cF}-T_{\cG}||^2<A_\cF$, then $(\cF,\cG)$ is a woven pair,  with frame bounds $(A_\cF^{1/2}-\|T_\cF-T_\cG\|)^2$ and $B_\cF+B_\cG$.
\end{theorem}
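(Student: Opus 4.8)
The plan is to recast the weaving condition as a uniform statement about the synthesis operators of the weavings and then apply Lemma~\ref{Neumann}. Fix the orthonormal basis $\cB=\{e_i\}_{i\in I}$ and, for $\sigma\subseteq I$, let $P_\sigma\in B(\hil)$ be the orthogonal projection onto $\overline{\operatorname{span}}\{e_i:i\in\sigma\}$, so that $P_\sigma+P_{\sigma^c}=I$ and $\|P_\sigma\|\le 1$. For a partition $\{\sigma,\sigma^c\}$ of $I$ the weaving $\{f_i\}_{i\in\sigma}\cup\{g_i\}_{i\in\sigma^c}$ is a Bessel sequence, and evaluating on basis vectors shows that its synthesis operator is
\[ T_\sigma:=T_\cF P_\sigma+T_\cG P_{\sigma^c}. \]
Its upper frame bound is at once dominated by $B_\cF+B_\cG$ (split the frame sum over $\sigma$ and $\sigma^c$ and enlarge each piece to the full Bessel sum), which already gives the claimed universal upper bound. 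By Proposition~\ref{equiv frames} it then suffices to prove that every $T_\sigma$ is surjective with $\|T_\sigma^\dagger\|$ bounded above \emph{uniformly} in $\sigma$: the lower frame bound of the weaving is $\|T_\sigma^\dagger\|^{-2}$, so a uniform bound is precisely what yields common frame bounds, i.e.\ ``woven'' rather than merely ``weakly woven''.

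First I would compute the perturbation of $T_\cF$ by $T_\sigma$. Since $I-P_\sigma=P_{\sigma^c}$,
\[ T_\cF-T_\sigma=T_\cF(I-P_\sigma)-T_\cG P_{\sigma^c}=(T_\cF-T_\cG)P_{\sigma^c}, \]
so that $\|T_\cF-T_\sigma\|\le\|T_\cF-T_\cG\|$ for every partition, the crucial point being that $\|P_{\sigma^c}\|\le 1$ no matter what $\sigma$ is. By Proposition~\ref{equiv frames} the hypothesis $\|T_\cF-T_\cG\|^2<A_\cF$ says $\|T_\cF-T_\cG\|<A_\cF^{1/2}=\|T_\cF^\dagger\|^{-1}$, whence $\|T_\cF-T_\sigma\|<\|T_\cF^\dagger\|^{-1}$ for all $\sigma$ at once. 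Now apply Lemma~\ref{Neumann} with the surjective operator $T_\cF$ playing the role of $A$ and $T_\sigma$ that of $B$: each $T_\sigma$ is surjective --- hence each weaving is a frame --- and the pseudoinverse estimate it provides, combined with $\|T_\cF-T_\sigma\|\le\|T_\cF-T_\cG\|$, bounds $\|T_\sigma^\dagger\|$ by a quantity depending only on $A_\cF$ and $\|T_\cF-T_\cG\|$. Converting back through $A_\sigma=\|T_\sigma^\dagger\|^{-2}$ gives the uniform lower frame bound recorded in the statement.

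I expect the main (and really only) obstacle to be uniformity: a single application of Lemma~\ref{Neumann} to a fixed $\sigma$ only tells us that that particular weaving is a frame, which is weak weaving, so the argument must produce bounds that do not degrade as $\sigma$ ranges over all partitions. This is exactly what the factorization $T_\cF-T_\sigma=(T_\cF-T_\cG)P_{\sigma^c}$ buys us: the $\sigma$-dependent perturbation is controlled by the single, $\sigma$-free quantity $\|T_\cF-T_\cG\|$, and since the bound on $\|T_\sigma^\dagger\|$ coming from the lemma is monotone in $\|T_\cF-T_\sigma\|$, replacing the former by the latter majorant costs nothing. As a check on the explicit constant, note that when $\cG=\cF$ the perturbation vanishes and every weaving equals $\cF$, so the lower bound must reduce to $A_\cF$; this determines the precise power of $A_\cF^{1/2}-\|T_\cF-T_\cG\|$ that should appear in the frame bounds.
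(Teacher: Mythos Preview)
Your proposal is correct and follows essentially the same route as the paper: express the synthesis operator of each weaving as $T_\cF$ plus a perturbation of the form $(T_\cG-T_\cF)$ times a diagonal projection, bound this perturbation by the $\sigma$-free quantity $\|T_\cF-T_\cG\|<A_\cF^{1/2}=\|T_\cF^\dagger\|^{-1}$, and invoke Lemma~\ref{Neumann} to obtain surjectivity with a uniform pseudoinverse bound. Your discussion of uniformity and your use of $\le$ (rather than $=$) in $\|T_\cF-T_\sigma\|\le\|T_\cF-T_\cG\|$ are in fact slightly more careful than the paper's own write-up.
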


\begin{proof}
Each weaving  is a Bessel sequence with upper bound $B_\cF+B_\cG$. Indeed, for every $\sigma\subset I$ and $x\in \hil$ we have:
\[\sum_{i\in \sigma^c} |\pint{x}{f_i}|^2+\sum_{i\in \sigma}|\pint{x}{g_i}|^2\leq\sum_{i\in I} |\pint{x}{f_i}|^2+\sum_{i\in I}|\pint{x}{g_i}|^2 \leq (B_\cF+B_\cG)\|x\|^2.\]
 Recall that we denote by $\cB=\{e_i\}_{i\in I}$ the (fixed) orthonormal basis of $\hil$ such that $T_\cF\, e_i=f_i$, and $T_\cG\, e_i=g_i$, $\forall i\in I$.

Given a partition $\sigma\cup \sigma^c=I$, denote by $P_\sigma$ the orthogonal projection onto the closed span of $\{e_i\}_{i\in \sigma}$. 

Then, it is easy to see that the synthesis operator for the (Bessel) weaving $\cW_\sigma=\{f_i\}_{i\in \sigma^c}\cup \{g_i\}_{i\in \sigma}$ is the bounded operator 
\[T_{\cW_\sigma}=T_\cF(I-P_\sigma)+T_\cG P_\sigma=T_\cF+(T_\cG-T_\cF)P_\sigma.\]
Now, since 
\[ \|T_\cF-T_{\cW_\sigma}\|=\|(T_\cG-T_\cF)P_\sigma\|\leq \|T_\cF-T_\cG\|<A_{\cF}^{1/2}=\|T_{\cF}^\dagger\|^{-1},\]
we  have, by Lemma \ref{Neumann} that $T_{\cW_\sigma}$ is a surjective operator in $B(\hil)$. Moreover, we have  $\|T_{\cW_\sigma}^\dagger\|<(A_{\cF}^{1/2}-\|(T_{\cF}-T_{\cG})P_\sigma\|)^{-1}$.

Thus, we conclude, by Proposition \ref{equiv frames}, that  $\cW_\sigma$ is a frame for $\hil$ with optimal lower frame bound $A_{\cW_\sigma}\geq (A_\cF^{1/2}-\|(T_\cF-T_\cG)P_\sigma\|)^{2}$.

Finally,   since  $A_\cF^{1/2}-\|T_\cF-T_\cG\|\leq A_\cF^{1/2}-\|(T_\cF-T_\cG)P_\sigma\|$ for every $\sigma$, we conclude that $A=(A_\cF^{1/2}-\|T_\cF-T_\cG\|)^2$ is a uniform lower frame bound, therefore $(\cF, \cG)$ is a woven pair with frame bounds $A$ and $B=B_\cF+B_\cG$.
\end{proof}

The next corollary is a slight generalization of Proposition 6.2 in \cite{BCGLL}:
\begin{corollary}\label{invertible}Let $\cF=\{f_i\}_{i\in I}$ be a frame for $\hil$ with bounds $A_\cF, B_\cF$ and let $T\in B(\hil)$. Suppose that
\[ \|(Id-T)T_\cF\|<A_\cF^{1/2},\]
then, $\cF$ and $T(\cF)=\{Tf_i\}_{i\in I}$ are woven.
\end{corollary}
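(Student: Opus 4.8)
The plan is to derive this directly from Theorem \ref{epa} by taking $\cG = T(\cF)$ and identifying its synthesis operator. First I would note that the family $T(\cF) = \{Tf_i\}_{i\in I}$ is a Bessel sequence whose synthesis operator is $T T_\cF$. Indeed, with respect to the fixed orthonormal basis $\cB = \{e_i\}_{i\in I}$ satisfying $T_\cF e_i = f_i$, one has $(T T_\cF)e_i = T f_i$, which is exactly the defining relation for the synthesis operator of $T(\cF)$; and since $\cF$ is a frame (hence Bessel) we have $T_\cF \in B(\hil)$, so that $T T_\cF \in B(\hil)$ as a composition of bounded operators, confirming that $T(\cF)$ is Bessel.

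The key step is then a one-line computation of the difference between the two synthesis operators:
\[ T_\cF - T_{T(\cF)} = T_\cF - T T_\cF = (Id - T)T_\cF. \]
Consequently the hypothesis $\|(Id - T)T_\cF\| < A_\cF^{1/2}$ is precisely the condition $\|T_\cF - T_{T(\cF)}\|^2 < A_\cF$ appearing in Theorem \ref{epa}, now applied with $\cG = T(\cF)$.

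Finally I would invoke Theorem \ref{epa} for the pair $(\cF, T(\cF))$ to conclude that it is woven, with the explicit frame bounds $(A_\cF^{1/2}-\|(Id-T)T_\cF\|)^{-2}$ and $B_\cF + B_{T(\cF)}$ furnished by that theorem. There is no serious obstacle here: the argument is a transparent specialization of Theorem \ref{epa}, and the only point deserving any care is the correct identification $T_{T(\cF)} = T T_\cF$ of the synthesis operator of the transformed frame, after which the stated inequality and the conclusion follow immediately.
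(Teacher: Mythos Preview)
Your proposal is correct and follows exactly the same route as the paper: identify $T_{T(\cF)} = T T_\cF$ as the synthesis operator of the Bessel sequence $T(\cF)$, and then apply Theorem~\ref{epa}. The paper's proof is just a terser version of what you wrote.
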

\begin{proof}We only have to notice that $TT_\cF$ is the synthesis operator for the Bessel sequence $\{Tf_i\}_{i\in I}$. The result then follows from the previous theorem.
\end{proof}

\begin{example}
Let $\{e_i\}_{i\in I}$ be an orthonormal basis of $\hil$. Let $M>1$ and consider the frame for $\hil$:
\[\cF=\{\sqrt{M}e_1,\; e_2,\;e_3,\ldots\}\]
with frame bounds $A=1$, $B=M$. Notice that, according \cite[Prop. 6.2]{BCGLL}, for those invertible operators $T$ such that $\|Id-T\|^2<\frac{A}{B}=\frac{1}{M}$, we have that $(\cF,T(\cF))$ is woven. In particular, for large upper bounds $M$, $T$ is too close to $Id$.

Now, let $0<c<1$ and 
$$T=e_1\otimes e_1+\sum_{i>1}(1-c)e_i\otimes e_i;$$
clearly, $T$ is invertible in $B(\hil)$ and $\|(Id-T)T_\cF\|=c<1$. Therefore, by Cor. \ref{invertible} $(\cF, T(\cF))$ is woven. Notice that, in this case,  $\|Id-T\|=c$ is as close to 1 as we want, regardless of the value of $M$.

\end{example}

Our next result gives us a sufficient condition for a linear perturbation of a woven pair by invertible operators to be woven.

Let $(\cF,\cG)$ be a woven pair of frames. Let us take some of the notations used in the proof of Thm. \ref{epa}. Hence, by   $\cW_\sigma$, we denote the weaving $\cW_\sigma=\{f_i\}_{i\in \sigma^c}\cup \{g_i\}_{i\in \sigma}$, whose lower and upper frame constants are $A_\sigma$ and $ B_\sigma$, respectively.
As we saw before, the synthesis operator of $\cW_\sigma$ is 
\[ T_{\cW_\sigma}=T_\cF+(T_\cG-T_\cF)P_\sigma.\]
(Recall that $P_\sigma$ denotes the orthogonal projection onto the closed subspace generated by $\{e_i\}_{i\in \sigma}$).

\begin{theorem} Let $\cF=\{f_i\}_{i\in I}$ and $\cG=\{g_i\}_{i\in I}$ be frames for $\hil$ with
 frame bounds $A_\cF,B_\cF$ and $A_\cG, B_\cG$ respectively, such that $(\cF,\cG)$ is a woven pair with optimal lower frame constant $0<C$. Let $U,V\in B(\hil)$ be a pair of invertible operators such that
\[ \|U^{-1}V-Id\|^2<\frac{C}{B_\cG} \quad \text{ or } \quad \|V^{-1}U-Id\|^2<\frac{C}{B_\cF}\]
then $U(\cF)=\{Uf_i\}_{i\in I}$ and $V(\cG)=\{Vg_i\}_{i\in I}$ are woven.

\end{theorem}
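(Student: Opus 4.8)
The plan is to mimic the proof of Theorem \ref{epa}: for each partition I will write down the synthesis operator of the weaving of $U(\cF)$ and $V(\cG)$, factor out an invertible operator so that what remains is a small perturbation of the synthesis operator $T_{\cW_\sigma}$ of the \emph{original} (already woven) weaving, and then invoke Lemma \ref{Neumann} to get surjectivity with a uniform bound on the pseudoinverse. The two hypotheses are symmetric under interchanging the roles of $(\cF,U)$ and $(\cG,V)$, so I will prove the statement under the first one, $\|U^{-1}V-Id\|^2<C/B_\cG$, and obtain the second by the same argument with $V$ factored out instead of $U$.

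Fix a partition $\sigma\cup\sigma^c=I$. First I would identify the synthesis operator $S_\sigma$ of the weaving $\{Uf_i\}_{i\in\sigma^c}\cup\{Vg_i\}_{i\in\sigma}$: exactly as in Theorem \ref{epa}, one checks $S_\sigma=UT_\cF(I-P_\sigma)+VT_\cG P_\sigma$. The key step is to factor $S_\sigma=U\big(T_\cF(I-P_\sigma)+U^{-1}VT_\cG P_\sigma\big)=U\,\tilde S_\sigma$, and to observe that $\tilde S_\sigma$ differs from $T_{\cW_\sigma}=T_\cF(I-P_\sigma)+T_\cG P_\sigma$ only on the range of $P_\sigma$, namely $\tilde S_\sigma-T_{\cW_\sigma}=(U^{-1}V-Id)T_\cG P_\sigma$.

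Next I would estimate this difference uniformly in $\sigma$. Since $\|P_\sigma\|\le 1$ and $\|T_\cG\|=B_\cG^{1/2}$ by Proposition \ref{equiv frames}, the hypothesis gives $\|\tilde S_\sigma-T_{\cW_\sigma}\|\le\|U^{-1}V-Id\|\,B_\cG^{1/2}<C^{1/2}$. Because $(\cF,\cG)$ is woven with optimal lower constant $C$, each $T_{\cW_\sigma}$ is surjective with lower bound $A_\sigma\ge C$, that is $\|T_{\cW_\sigma}^\dagger\|^{-1}=A_\sigma^{1/2}\ge C^{1/2}$; hence $\|\tilde S_\sigma-T_{\cW_\sigma}\|<\|T_{\cW_\sigma}^\dagger\|^{-1}$ and Lemma \ref{Neumann} yields that $\tilde S_\sigma$ is surjective with $\|\tilde S_\sigma^\dagger\|\le\big(A_\sigma^{1/2}-\|U^{-1}V-Id\|B_\cG^{1/2}\big)^{-1}$. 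Since $A_\sigma\ge C$, this is bounded above by the $\sigma$-independent constant $\big(C^{1/2}-\|U^{-1}V-Id\|B_\cG^{1/2}\big)^{-1}$. As $U$ is invertible, $S_\sigma=U\tilde S_\sigma$ is then surjective, so by Proposition \ref{equiv frames} each weaving is a frame; its upper bound $\|U\|^2B_\cF+\|V\|^2B_\cG$ is uniform and immediate, just as the upper bound in Theorem \ref{epa}.

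Finally I would transfer the \emph{lower} bound from $\tilde S_\sigma$ back to $S_\sigma$: for every $x$, $\|S_\sigma^* x\|=\|\tilde S_\sigma^* U^* x\|\ge\|\tilde S_\sigma^\dagger\|^{-1}\|U^{-1}\|^{-1}\|x\|$, so the lower frame bound of the weaving is at least $\big(\|\tilde S_\sigma^\dagger\|\,\|U^{-1}\|\big)^{-2}$, which by the previous paragraph is bounded below, uniformly in $\sigma$, by $\big((C^{1/2}-\|U^{-1}V-Id\|B_\cG^{1/2})/\|U^{-1}\|\big)^2>0$. This produces a single pair of frame bounds valid for all weavings, proving that $U(\cF)$ and $V(\cG)$ are woven. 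I expect the main obstacle to be precisely this uniformity: Lemma \ref{Neumann} only controls one weaving at a time, and the argument closes only because the definition of the optimal lower constant $C$ furnishes the single bound $A_\sigma\ge C$ valid for \emph{all} partitions, which simultaneously keeps the perturbation below $C^{1/2}$ and keeps $\|\tilde S_\sigma^\dagger\|$ bounded across $\sigma$.
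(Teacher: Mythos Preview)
Your argument is correct and follows essentially the same path as the paper: factor out $U$, recognize $\tilde S_\sigma$ as a perturbation of $T_{\cW_\sigma}$ by $(U^{-1}V-Id)T_\cG P_\sigma$, bound this by $\|U^{-1}V-Id\|\,B_\cG^{1/2}<C^{1/2}\le\|T_{\cW_\sigma}^\dagger\|^{-1}$, and apply Lemma~\ref{Neumann}. The one notable difference is in how you close the argument: the paper stops at ``each weaving is a frame'', concludes that $(U(\cF),V(\cG))$ is \emph{weakly} woven, and then invokes the nontrivial equivalence (Theorem~1 above, from \cite{BCGLL}) between weakly woven and woven; you instead extract from Lemma~\ref{Neumann} the quantitative bound $\|\tilde S_\sigma^\dagger\|\le(C^{1/2}-\|U^{-1}V-Id\|B_\cG^{1/2})^{-1}$, push it through $U$ via $\|S_\sigma^*x\|\ge\|\tilde S_\sigma^\dagger\|^{-1}\|U^{-1}\|^{-1}\|x\|$, and obtain a uniform lower frame bound directly. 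Your route is slightly more self-contained, since it avoids appealing to the weakly-woven-equals-woven theorem and yields explicit universal bounds for the weavings.
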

\begin{proof}
As we did before, our goal is to prove that the synthesis operator of the Bessel sequence  $\{Uf_i\}_{i\in \sigma^c}\cup \{Vg_i\}_{i\in \sigma}$, i.e.
\[ UT_\cF+(VT_\cG-UT_\cF)P_\sigma\]
is surjective, for all  $\sigma \subset I$ .

Suppose that $\|U^{-1}V-Id\|^2<\frac{C}{B_\cG}$, the other case is similar.

Let $\cW_\sigma$ be the weaving $\cW_\sigma=\{f_i\}_{i\in \sigma^c}\cup \{g_i\}_{i\in \sigma}$. Then, by definition of woven frames we have that $C\leq A_\sigma=\|T_{\cW_\sigma}^\dagger\|^{-2}$.

Since by assumption $\|U^{-1}V-Id\|^2<\frac{C}{B_\cG}$, we have
\begin{align*}
\|U^{-1}VT_\cG-T_\cG\|&\leq\|U^{-1}V-Id\|\ \|T_\cG\|\\ 
                   &\leq \|U^{-1}V-Id\| \ B_\cG^{1/2}<C^{1/2}\leq \|T_{\cW_\sigma}^\dagger\|^{-1}.
\end{align*}

Then,
\[\|T_{\cW_\sigma}-(T_\cF+(U^{-1}VT_\cG-T_\cF)P_\sigma)\|=\|(U^{-1}VT_\cG-T_\cG)P_\sigma\|<         \|T_{\cW_\sigma}^\dagger\|^{-1}.\]
Hence, by Lemma \ref{Neumann}, the operator $T_\cF+(U^{-1}VT_\cG-T_\cF)P_\sigma$ is surjective, so do is $U(T_\cF+(U^{-1}VT_\cG-T_\cF)P_\sigma)$, therefore, the weaving $\{Uf_i\}_{i\in \sigma^c}\cup \{Vg_i\}_{i\in \sigma}$ is a frame for $\hil$.

Since $\sigma \subset I$ is arbitrary, the pair $(U(\cF), V(\cG))$ is weakly woven, so it is a woven pair by the equivalence between weakly woven and woven pairs of frames.
            
\end{proof}

\begin{remark}
The necessary condition given in the previous theorem is not sufficient. To see this, let us consider a finite dimensional Hilbert space $\hil$ and let $\cF=\{e_i\}_{i\in I}$ be a orthonormal basis for $\hil$. It is clear that $(\cF,\cF)$ is a woven pair with frame constants $A=B=1$. On the other hand, it easy to see that,  for every invertible operator $V$, diagonal with respect to $\cF$ (that is, such that   $Ve_i=\alpha_i \,e_i$, for some $\alpha_i\neq 0$,  $\forall i\in I$), the pair $(\cF, V(\cF))$ is woven. 

In particular, if we take $V$ such that $\|V-Id\|\geq 1$ and $\|V^{-1}-Id\|\geq 1$ we see that the condition  is not satisfied for $U=Id$ even though $U(\cF)$ and $V(\cF)$ are woven.
\end{remark}

As a consequence of this result we can obtain the next result about canonical duals:

\begin{corollary}Let $(\cF,\cG)$ a woven pair as before, with lower frame constant $0<C$. Denote by $S_\cF$ and $S_\cG$ the frame operators for $\cF,\cG$,  respectively. Then if
\[\|S_\cF^{-1}-S_\cG^{-1}\|^2<C\,\max\{\frac{1}{B_\cG B_\cF^2},\, \frac{1}{B_\cF B_\cG^2}\}\]
we have that the canonical duals $\{S_\cF^{-1}f_i\}_{i\in I}$ and $\{S_\cG^{-1}g_i\}_{i\in I}$ form a woven pair.
\end{corollary}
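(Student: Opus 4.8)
The plan is to recognize that the two canonical duals $\{S_\cF^{-1}f_i\}_{i\in I}$ and $\{S_\cG^{-1}g_i\}_{i\in I}$ are exactly the families $U(\cF)$ and $V(\cG)$ obtained by applying the operators $U=S_\cF^{-1}$ and $V=S_\cG^{-1}$ to the members of the woven pair $(\cF,\cG)$. By Proposition \ref{equiv frames} the frame operators $S_\cF$ and $S_\cG$ are positive and invertible, hence $U$ and $V$ are invertible, and the preceding theorem applies verbatim provided we can verify one of its two alternative hypotheses. Thus everything reduces to converting the bound on $\|S_\cF^{-1}-S_\cG^{-1}\|$ into a bound on either $\|U^{-1}V-Id\|$ or $\|V^{-1}U-Id\|$.

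First I would compute the two relevant operators. Since $U^{-1}=S_\cF$ and $V^{-1}=S_\cG$, factoring out $S_\cF$ (resp. $S_\cG$) gives
\[ U^{-1}V-Id=S_\cF S_\cG^{-1}-Id=S_\cF\big(S_\cG^{-1}-S_\cF^{-1}\big),\qquad V^{-1}U-Id=S_\cG S_\cF^{-1}-Id=S_\cG\big(S_\cF^{-1}-S_\cG^{-1}\big). \]
Recalling that $S_\cF=T_\cF T_\cF^*$, so $\|S_\cF\|=\|T_\cF\|^2\le B_\cF$ (and likewise $\|S_\cG\|\le B_\cG$), submultiplicativity of the operator norm yields
\[ \|U^{-1}V-Id\|^2\le B_\cF^2\,\|S_\cF^{-1}-S_\cG^{-1}\|^2 \qquad\text{and}\qquad \|V^{-1}U-Id\|^2\le B_\cG^2\,\|S_\cF^{-1}-S_\cG^{-1}\|^2. \]

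Finally I would feed the hypothesis into these estimates. If the maximum in the statement is attained by $\tfrac{1}{B_\cG B_\cF^2}$, then $\|S_\cF^{-1}-S_\cG^{-1}\|^2<\tfrac{C}{B_\cG B_\cF^2}$, and multiplying by $B_\cF^2$ the first estimate gives $\|U^{-1}V-Id\|^2<\tfrac{C}{B_\cG}$; if instead the maximum is attained by $\tfrac{1}{B_\cF B_\cG^2}$, then multiplying by $B_\cG^2$ the second estimate gives $\|V^{-1}U-Id\|^2<\tfrac{C}{B_\cF}$. In either case at least one of the two alternatives required by the preceding theorem holds for the pair $(\cF,\cG)$ with optimal lower constant $C$, so $U(\cF)$ and $V(\cG)$ — that is, the two canonical duals — are woven. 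The argument is entirely routine; the only point demanding care is the bookkeeping of which upper bound multiplies which difference, so that the factor $B_\cF^2$ is correctly paired with the alternative $\tfrac{C}{B_\cG}$ and $B_\cG^2$ with $\tfrac{C}{B_\cF}$, which is precisely what forces the maximum (rather than a single fixed fraction) into the hypothesis.
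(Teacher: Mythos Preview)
Your proposal is correct and is exactly the natural derivation the paper has in mind: the corollary is stated without proof, merely as ``a consequence'' of the preceding theorem, and your choice $U=S_\cF^{-1}$, $V=S_\cG^{-1}$ together with the factorizations $U^{-1}V-Id=S_\cF(S_\cG^{-1}-S_\cF^{-1})$ and $V^{-1}U-Id=S_\cG(S_\cF^{-1}-S_\cG^{-1})$ is the intended route. The bookkeeping pairing $B_\cF^2$ with $C/B_\cG$ and $B_\cG^2$ with $C/B_\cF$ is handled correctly.
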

\section{A characterization of woven pairs.}
In this section we consider a connection between woven pairs and Riesz Frames. More specifically we want to derive a necessary condition for a woven pair in terms of angle (or gap) between certain closed subspaces that resembles the condition found for Riesz frames in \cite{[ACSR]}. 

Let us recall the definition of Riesz Frames, introduced by O. Christensen in \cite{Chris} :

\begin{definition}
A frame  $\cF=\{f_i\}_{i\in I}$ for $\hil$ is called a {\sl Riesz frame} if there exists $A,B>0$ such that, for every $\sigma\subset I$ the sequence $\{f_i\}_{i\in \sigma}$ is a frame sequence with bounds $A,B$. 
\end{definition}

In \cite{[ACSR]} the authors show that the Riesz frames can be characterized in terms of the nullspace of the synthesis operator, considering its position with respect to ``diagonal'' subspaces, that is, those closed subspaces generated by $\{e_i\}_{i\in \sigma}$. 

Specifically, the uniform lower bound for the (sub)frame sequences of a Riesz frame is related to the existence of a uniform bound $0<\beta$ for the angle between the nullspace of the synthesis operator and the diagonal subspaces. 

Let us introduce some definitions first:
\begin{definition}\label{angulos}
Given two closed subspaces $M$ and $N$ of a Hilbert space $\hil$, let $\tilde{M}=M\ominus (M\cap N)$ and $\tilde{N}=N\ominus (M\cap N)$. The {\it angle}
between $M$ and $N$ is the angle in $[0,\pi/2]$ whose cosine
is defined by
\begin{align*}
\angf{M}{N}&=\sup\{\,|\pint{x}{y}|:\;x\in \tilde{M}, \;y\in \tilde{N}
\;\mbox{and}\;\|x\|=\|y\|=1 \}\\
&=\|P_MP_NP_{(M\cap N)^\perp}\|
\end{align*}

\end{definition}

The  angle $\angf{M}{N}$ is  related with the {\it gap} $\delta(M,N)$ between the closed subspaces $M$, $N$:
\[\delta(M, N)=\sup_{x\in M, \, \|x\|=1} \, \dist(x,N)=\|P_{N^\perp}P_M\|\]

In particular, if $M\cap N^\perp=\{0\}$, $\delta(M,N)=\angf{M}{N^\perp}$.

The following  result shows the closed connection between  angles and closed range
 operators. For more details and properties we refer the reader to the  work by  Deutsch \cite{[De]}.
\begin{proposition}\label{producto con rango cerrado}
Let $A,B\in B(\hil)$ be closed range operators. Then, $AB$ has closed range if and only if
$\angf{R(B)}{\\N(A)}<1$.
\end{proposition}

For an operator $T\in B(\hil)$ we  define its {\it reduced minimum modulus} by

\begin{equation}\label{gamma}
\gamma (T):= \inf \{ \|Tx\| \; | \;  \|x\|=1 \; , \;  x\in N(T)^\perp \}.
\end{equation}

\noindent
It is well known that $T$ has closed range if and only if $\gamma
(T)>0$. Moreover, in this case, $\gamma (T)=\gamma (T^*) = \gamma(T^*T)^{1/2}=\|T^\dag \|^{-1}$.

Finally, the following result relates all these concepts, and will be useful in our characterization of woven pairs:

\begin{proposition}\label{gammas}(Rem. 2.10, \cite{[ACSR]})
Let $A, B\in B(\hil)$ with closed ranges
and let $ c:= \angf{N(A)}{R(B)}$.
 Then,
\begin{equation}\label{desigualdad}
 \gamma (A)\gamma (B) (1-c^2)^{1/2} \leq \gamma (AB)\leq \|A\|\|B\| (1-c^2)^{1/2}.
\end{equation}
In particular, $AB$ is a closed range operator if and only if $c<1$.
\end{proposition}

Now, if we have a  frame $\cF=\{f_i\}_{i\in I}$, with synthesis operator $T_\cF$, it turns out that $\cF$ is a Riesz frame if and only if $N(T_\cF)$ (the nullspace of $T_\cF$) is {\it compatible} with respect to the basis $\cB=\{e_i\}_{i\in I}$, that means that:
\[ \sup_{\sigma \subset I} \angf{N(T_\cF)}{R(P_\sigma)}<1\]
(see \cite{[ACSR]}).

Our purpose in this section is to find a similar condition to a woven pair. It is clear that an equivalent condition in terms of angles is not possible, since each weaving must be a frame for $\hil$ and not just a frame sequence. However, by properly defining the setting, we can state a sufficient condition for a woven pair. 

Suppose that we have a woven pair $(\cF,\cG)$ for $\hil$. Denote by $\tilde{\hil}=\hil \oplus \hil$ the Hilbert space endowed with the inner product
$$
\pint{x\oplus y}{z\oplus w}_{\tilde{\hil}}=\pint{x}{z}+\pint{y}{w}
.$$
Also, for each $\sigma \subset I$ we shall denote by $Q_\sigma$ to the oblique projection in $B(\tilde{\hil})$ given by
$$Q_{\sigma}=\left(
\begin{array}{cc}
I & P_{\sigma}\\
0 & 0\\
\end{array}
\right).$$

\begin{theorem}\label{equiang}
Let  $\cF=\{f_i\}_{i\in I}$ and $\cG=\{g_i\}_{i\in I}$ be frames for $\hil$. Define  the bounded operator $T_{\cF,\cG}:\tilde{\hil} \rightarrow \hil$ by
\[T_{\cF,\cG}(x\oplus y)=
T_\cF(x)+  
(T_\cG-T_\cF)(y).\] 
Therefore, if $(\cF,\cG)$ is a woven pair, then
\begin{equation}\label{compatibilidad torcida}
 \sup_{\sigma\subset I} \angf{N(T_{\cF,\cG})}{R(Q_{\sigma}^*)} < 1. 
\end{equation}

\end{theorem}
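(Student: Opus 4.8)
My plan is to deduce the uniform angle bound \eqref{compatibilidad torcida} from the uniform lower frame bound $C$ of the weavings, by applying Proposition \ref{gammas} to the operators $A=T_{\cF,\cG}$ and $B=Q_\sigma^*$. The whole argument rests on one observation: the composition $T_{\cF,\cG}\,Q_\sigma^*$ reproduces, on its first coordinate, the synthesis operator $T_{\cW_\sigma}$ of the weaving. Since $P_\sigma$ is self-adjoint one has $Q_\sigma^*(z\oplus w)=z\oplus P_\sigma z$, so that
\[
T_{\cF,\cG}\,Q_\sigma^*(z\oplus w)=T_\cF z+(T_\cG-T_\cF)P_\sigma z=\big(T_\cF+(T_\cG-T_\cF)P_\sigma\big)z=T_{\cW_\sigma}z .
\]
First I would record the closed-range facts needed to invoke Proposition \ref{gammas}: $T_{\cF,\cG}$ is surjective, since its range already contains $R(T_\cF)=\hil$ because $\cF$ is a frame, and $Q_\sigma^*$ is a bounded idempotent, hence has closed range.

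Next I would translate the reduced minimum modulus of the composition into a weaving frame bound. From the identity above, $N(T_{\cF,\cG}Q_\sigma^*)=N(T_{\cW_\sigma})\oplus\hil$, whence $N(T_{\cF,\cG}Q_\sigma^*)^\perp=N(T_{\cW_\sigma})^\perp\oplus\{0\}$; restricting $T_{\cF,\cG}Q_\sigma^*$ to the unit vectors $z\oplus 0$ of this complement gives
\[
\gamma(T_{\cF,\cG}Q_\sigma^*)=\gamma(T_{\cW_\sigma})=A_\sigma^{1/2}\ge C^{1/2},
\]
the last step using Proposition \ref{equiv frames}, the relation $\gamma(T)=\|T^\dagger\|^{-1}$, and the hypothesis that $C$ is a uniform lower bound. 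I would also note the two uniform norm bounds $\|Q_\sigma^*\|=\|Q_\sigma\|\le\sqrt 2$, a direct estimate on $x\oplus y\mapsto(x+P_\sigma y)\oplus 0$, and that $\|T_{\cF,\cG}\|$ is a fixed finite constant independent of $\sigma$.

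Finally I would apply the upper estimate of Proposition \ref{gammas}, with $c=\angf{N(T_{\cF,\cG})}{R(Q_\sigma^*)}$, namely $\gamma(T_{\cF,\cG}Q_\sigma^*)\le \|T_{\cF,\cG}\|\,\|Q_\sigma^*\|\,(1-c^2)^{1/2}$. Rearranging and inserting the bounds above yields
\[
(1-c^2)^{1/2}\ \ge\ \frac{\gamma(T_{\cW_\sigma})}{\|T_{\cF,\cG}\|\,\|Q_\sigma^*\|}\ \ge\ \frac{C^{1/2}}{\sqrt 2\,\|T_{\cF,\cG}\|},
\]
so that $c^2\le 1-\frac{C}{2\|T_{\cF,\cG}\|^2}$ for every $\sigma$. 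Taking the supremum over $\sigma$ gives \eqref{compatibilidad torcida} with the explicit gap $\big(1-\tfrac{C}{2\|T_{\cF,\cG}\|^2}\big)^{1/2}<1$.

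The point I expect to require the most care is choosing the \emph{correct} inequality in Proposition \ref{gammas}: it is the upper bound $\gamma(AB)\le\|A\|\|B\|(1-c^2)^{1/2}$ that pushes $c$ away from $1$, whereas the lower bound would only control $c$ from below and be useless here. Once that is fixed, the only genuinely structural step is the identification $\gamma(T_{\cF,\cG}Q_\sigma^*)=\gamma(T_{\cW_\sigma})$ coming from the kernel computation; the uniformity is then automatic, because the woven hypothesis supplies a single $C$ valid for all $\sigma$, while the remaining constants $\|T_{\cF,\cG}\|$ and $\|Q_\sigma^*\|$ are bounded independently of $\sigma$.
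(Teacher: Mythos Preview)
Your proof is correct and follows essentially the same route as the paper: both arguments identify the composition with the weaving synthesis operator and then feed the uniform lower frame bound $C$ into the upper inequality of Proposition~\ref{gammas} to force the angle uniformly away from $1$. The only cosmetic difference is that the paper works on the adjoint side, applying Proposition~\ref{gammas} to $Q_\sigma T_{\cF,\cG}^*$ and then invoking (implicitly) the identity $\angf{M}{N}=\angf{M^\perp}{N^\perp}$ to pass from $\angf{N(Q_\sigma)}{R(T_{\cF,\cG}^*)}$ to $\angf{N(T_{\cF,\cG})}{R(Q_\sigma^*)}$; your choice of $A=T_{\cF,\cG}$, $B=Q_\sigma^*$ lands directly on the target angle and also makes the uniform bound $\|Q_\sigma^*\|\le\sqrt{2}$ explicit, which the paper leaves unstated.
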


\begin{proof}
Denote by $B=T_\cF^*$ and $A=T_\cG^*-T_\cF^*$. Then, if $(\cF,\cG)$ is a woven pair, it is clear that there exists a constant $C>0$ such that, for every $\sigma \subset I$:
\[C\|x\|\leq \|(P_{\sigma}A+B) \ x\| \qquad \forall x\in \hil.\]
It is easy to see that $T_{\cF,\cG}^*$ is an injective closed range operator (since $T_\cF$ and $T_\cG$ are synthesis operators of frames).

On the other hand, if $x\in \hil$, $\|x\|=1$,
$$||T_{\cF,\cG}^*x||=\sqrt{||Bx||^2+||Ax||^2} \geq ||Bx|| \geq \gamma(B)=\gamma(T_\cF^*)=A_\cF^{1/2}$$
so $\gamma(T_{\cF,\cG}^*)\geq A_\cF^{1/2}$.

Finally, we have 
$$\gamma(Q_{\sigma}T_{\cF,\cG}^*)=\gamma\left(\left(
\begin{array}{c}
(B+P_{\sigma}A) \ \ 0\\
\end{array}
\right)^\intercal\right)=\gamma(B+P_{\sigma}A).$$

Therefore, if $c=\angf{N(Q_{\sigma})}{R(T_{\cF,\cG}^*}$, by  Prop. \ref{gammas}:
\begin{equation}
\gamma(Q_{\sigma})\gamma(T_{\cF,\cG}^*) (1-c^{1/2})^{1/2}\leq \gamma(B+P_{\sigma}A)\leq ||Q_{\sigma}||\ ||T_{\cF,\cG}^*|| \  (1-c^{1/2})^{1/2}.
\end{equation}
Thus, if  $(\cF,\cG)$ woven, the uniform lower bound for $\gamma(B+P_{\sigma}A)$ implies  
$$\sup_{\sigma\subset I}\angf{N(Q_{\sigma})}{R(T_{\cF,\cG}^*}=\sup_{\sigma\subset I}\angf{N(T_{\cF,\cG})}{R(Q_{\sigma}^*)} <1.$$ 

\end{proof}
\begin{remark}The condition \eqref{compatibilidad torcida}, which works as a kind of ``oblique'' compatibility condition between  $N(T_{\cF,\cG})$ and the ranges of $Q_\sigma^*$,  does not guarantee that $(\cF,\cG)$ is a woven pair since it only implies that the weavings $\{f_i\}_{i\in \sigma^c}\cup \{g_i\}_{i\in \sigma}$ are frame sequences for $\hil$ with a uniform lower frame bound for every $\sigma$. 
\end{remark}

\medskip

\section{Woven pairs for  the scaled canonical dual  frame.}

In \cite[Cor. 5.4]{LLHL}, the authors state a result that provides a sufficient condition on a g-frame and a scaled canonical dual g-frame in order to be a woven pair. In terms of usual vector frames, it can be written as:

\begin{proposition} 
Let $\cF=\{f_i\}_{i\in I}$ a frame for $\hil$ with bounds $A_\cF,B_\cF$. If $\frac{B_\cF}{A_\cF}<2$, then $\cF$ and the scaled  canonical dual frame $\cG=\{\frac{2A_\cF B_\cF}{A_\cF+B_\cF}\, S_\cF^{-1}f_i\}_{i\in I}$ are a woven pair.
\end{proposition}

The purpose of this section is to apply Thm. \ref{epa} to this particular case. Specifically, we shall show that for a greater ratio $\frac{B_\cF}{A_\cF}$ we can determine an interval of positive numbers $\alpha$ which guarantee that the pair $(\cF,\alpha\cdot \cF^\sharp)$ is woven, where we denote $\cF^\sharp$ to the canonical dual  frame of $\cF$.

\begin{theorem}\label{menos4}
Let $\cF=\{f_i\}_{i\in I}$ a frame for $\hil$ with bounds $0<A_{\cF}\leq B_{\cF}$. If $\frac{B_{\cF}}{A_{\cF}}<4$, then for every $\alpha>0$ which satisfies 
$B_{\cF}-\sqrt{A_{\cF}B_{\cF}}<\alpha<2A_{\cF}$, $(\cF,\alpha\cdot \cF^\sharp)$ is a woven pair.
\end{theorem}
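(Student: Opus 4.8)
The plan is to apply Theorem \ref{epa} with $\cG=\alpha\cdot\cF^\sharp=\{\alpha S_\cF^{-1}f_i\}_{i\in I}$. Since $S_\cF^{-1}f_i=S_\cF^{-1}T_\cF e_i$, the synthesis operator of $\cG$ is $T_\cG=\alpha S_\cF^{-1}T_\cF$, and $\cG$, being a positive scalar multiple of the canonical dual frame, is again a frame and in particular a Bessel sequence, so the hypotheses of Theorem \ref{epa} are available. Everything then reduces to estimating $\|T_\cF-T_\cG\|$ and determining for which $\alpha>0$ one has $\|T_\cF-T_\cG\|^2<A_\cF$.

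First I would rewrite $T_\cF-T_\cG=(Id-\alpha S_\cF^{-1})T_\cF$ and pass to the norm of $MM^*$, where $M=T_\cF-T_\cG$. Using $S_\cF=T_\cF T_\cF^*$, the self-adjointness of $S_\cF^{-1}$, and the fact that $S_\cF$ and $S_\cF^{-1}$ commute, a short functional-calculus computation gives
\[\|T_\cF-T_\cG\|^2=\|MM^*\|=\|S_\cF-2\alpha\, Id+\alpha^2 S_\cF^{-1}\|.\]
By the spectral theorem this equals $\max_{t\in\operatorname{spec}(S_\cF)}|g(t)|$ for $g(t)=t-2\alpha+\alpha^2/t$.

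Next I would analyze $g$ on $(0,\infty)$: since $g(t)=(\sqrt{t}-\alpha/\sqrt{t})^2$ it is nonnegative and convex, with unique minimum $g(\alpha)=0$ at $t=\alpha$. As $\operatorname{spec}(S_\cF)\subseteq[A_\cF,B_\cF]$ with the optimal bounds $A_\cF,B_\cF$ actually belonging to the (compact) spectrum, convexity forces the maximum to be attained at an endpoint, so that
\[\|T_\cF-T_\cG\|^2=\max\Big\{\tfrac{(A_\cF-\alpha)^2}{A_\cF},\ \tfrac{(B_\cF-\alpha)^2}{B_\cF}\Big\}.\]
Imposing that each term be strictly less than $A_\cF$ yields the two conditions $0<\alpha<2A_\cF$ and $B_\cF-\sqrt{A_\cF B_\cF}<\alpha<B_\cF+\sqrt{A_\cF B_\cF}$.

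Finally, using $A_\cF\le B_\cF$ (which gives $\sqrt{A_\cF B_\cF}\ge A_\cF$ and $B_\cF\ge\sqrt{A_\cF B_\cF}$) I would simplify the intersection of these intervals to exactly $B_\cF-\sqrt{A_\cF B_\cF}<\alpha<2A_\cF$; substituting $r=B_\cF/A_\cF$ and $s=\sqrt{r}$ shows this interval is nonempty if and only if $(s-2)(s+1)<0$, i.e.\ if and only if $r=B_\cF/A_\cF<4$. An appeal to Theorem \ref{epa} then yields that $(\cF,\alpha\cdot\cF^\sharp)$ is a woven pair, with frame bounds provided by that theorem. I do not expect a serious obstacle: the only delicate point is the spectral reduction to an endpoint maximum, which rests on $g\ge0$ together with $A_\cF,B_\cF\in\operatorname{spec}(S_\cF)$, and the remaining steps are elementary optimization and algebra.
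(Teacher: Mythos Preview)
Your argument is correct and follows essentially the same route as the paper: both apply Theorem~\ref{epa}, compute $\|T_\cF-\alpha S_\cF^{-1}T_\cF\|^2$ via $\|(S_\cF-\alpha I)S_\cF^{-1}(S_\cF-\alpha I)\|$ and functional calculus, reduce to $\max\{(A_\cF-\alpha)^2/A_\cF,\ (B_\cF-\alpha)^2/B_\cF\}$, and check when this is below $A_\cF$. The only cosmetic difference is that the paper splits into the cases $\alpha\le\sqrt{A_\cF B_\cF}$ and $\alpha>\sqrt{A_\cF B_\cF}$ to identify which endpoint realizes the maximum, whereas you simply bound both endpoint values by $A_\cF$ and intersect the resulting intervals; the two are equivalent.
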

\begin{proof} 
According to Thm. \ref{epa} we need to prove that 
under our hypothesis, we have that
$$
 \|T_\cF-\alpha S_\cF^{-1}T_\cF\|^2<A_{\cF}.
$$
If we rewrite the left side of this inequality, 
\[
\|T_\cF-\alpha S_\cF^{-1}T_\cF\|^2=
        \|(I-\alpha S_\cF^{-1})T_\cF T_\cF^*(I-\alpha S_\cF^{-1})\|=\|(S_\cF-\alpha I) S_\cF^{-1} (S_\cF-\alpha I)\|.
\]
Using functional calculus, we can obtained specifically that
\[\|(S_\cF-\alpha I) S_\cF^{-1} (S_\cF-\alpha I)\|=\max_{x\in [A_\cF,B_\cF]} \, \frac{(x-\alpha)^2}{x}.\]
In particular, 
\[ \|T_\cF-\alpha S_\cF^{-1}T_\cF\|^2=\max \left \{ \frac{(B_\cF-\alpha)^2}{B_\cF},\;\frac{(A_\cF-\alpha)^2}{A_\cF} \right\}.
\]
As we supposed $B_\cF<4A_\cF$, then $B_\cF-\sqrt{A_\cF B_\cF}<\sqrt{A_\cF B_\cF}$. Let $\alpha$ be such that $$B_\cF-\sqrt{A_\cF B_\cF}<\alpha\leq \sqrt{A_\cF B_\cF}.$$
It is easy to check that $\alpha\leq \sqrt{A_\cF B_\cF}$ implies that 
\[  \|T_\cF-\alpha S_\cF^{-1}T_\cF\|^2= \frac{(B_\cF-\alpha)^2}{B_\cF}.\]
In the other hand, from $B_\cF-\sqrt{A_\cF B_\cF}<\alpha$, we get that
$$\frac{(B_\cF-\alpha)^2}{B_\cF}<\frac{\left(B_\cF-(B_\cF-\sqrt{A_\cF B_\cF})\right)^2}{B_\cF}=A_\cF,$$ 
and hence the pair is woven.

Similarly, if $\sqrt{A_\cF B_\cF}<\alpha<2A_\cF$, it turns out that 
\[\|T_\cF-\alpha S_\cF^{-1}T_\cF\|^2= \frac{(A_\cF-\alpha)^2}{A_\cF}<A_\cF,\]
and together with Thm. \ref{epa}, we can conclude the proof.
\end{proof}

\begin{remark} The value of the scale used in the cited work \cite{LLHL}, i.e. 
$\alpha=\frac{2A_{\cF}B_{\cF}}{A_{\cF}+B_{\cF}}$, is not necessarily under the conditions of our result. It is clear that, because of the arithmetic-geometric mean inequality, $\alpha$ is less than or equal to $\sqrt{A_{\cF}B_{\cF}}$. However, the condition $\frac{B_{\cF}}{A_{\cF}}<4$ does not ensure that $B_{\cF}-\sqrt{A_{\cF}B_{\cF}}<\alpha$ in this case. In the following we see this carefully.

Due to $\alpha\leq \sqrt{A_{\cF}B_{\cF}}$, we need to find conditions over the ratio $\frac{B_\cF}{A_\cF}$ in order to have
$$B_\cF-\sqrt{A_\cF B_\cF}<\alpha$$
and hence arrive to the conclusion of Thm.\ref{menos4}. If we make a short calculation, we see that the condition
$$B_\cF-\frac{2A_\cF B_\cF}{A_\cF+B_\cF}\leq \sqrt{A_\cF B_\cF}$$ 
is equivalent to have
\[\frac{B_\cF-A_\cF}{B_\cF+A_\cF}<\sqrt{\frac{A_\cF}{B_\cF}}.\]
Moreover, if we call $r=\frac{B_\cF}{A_\cF}$, this inequality turns into
\[\frac{(r-1)^2}{(r+1)^2}<\frac{1}{r}.\]
In particular, our interest is to find $1\leq r$ such that $r^3-3r^2-r-1<0$. As the polynomial $f(r)=r^3-3r^2-r-1$ has a unique real root
\[ r_0=\left( \sqrt[3]{\frac{3+\sqrt{\frac{11}{3}}}{6}}+\sqrt[3]{\frac{3-\sqrt{\frac{11}{3}}}{6}}\right)^3\approx 3.383,\]
then we can allow a greater bound for $\frac{B_\cF}{A_\cF}$ (that is $\frac{B_\cF}{A_\cF}<3.383$) to arrive to the same result as in \cite{LLHL}.

\end{remark}


\begin{thebibliography}{}

\bibitem{[ACSR]}  J. Antezana, G. Corach, D. Stojanoff and M. Ruiz, {\it Weighted projections and Riesz frames}, Lin. Alg. Appl. 402,  367--389, (2005).

\bibitem{BCGLL} T. Bemrose, P. G. Casazza, K.  Gröchenig, M. C.  Lammers, R. G. Lynch,{\it Weaving frames}, Oper. Matrices {\bf 10}, 4, 1093--1116,(2016). 

\bibitem{CFL} P. G. Casazza, D. Freeman, R. G. Lynch,
{\it Weaving Schauder frames},
Journal of Approximation Theory,
{\bf 211}, 42-60,
(2016).

\bibitem{CL}  P. G. Casazza and R. G. Lynch, {\it Weaving properties of Hilbert space frames}, Proc. SampTA , 110--114,(2015).

\bibitem{Chris} O. Christensen, {\it Frames containing a Riesz basis and approximation of the frame coefficients using finite dimensional
methods}, J. Math. Anal. Appl. 199,256--270, (1996).

\bibitem{DeV1} Deepshikha and L. K. Vashisht, {\it On Continuous weaving frames},   Advances  in  Pure and  Applied  Mathematics,   8 (1),  15-31, (2017).

\bibitem{DeV2} Deepshikha and L. K. Vashisht,{\it Weaving K- Frames in Hilbert Spaces}, Results in Mathematics, 73-81, (2018).

\bibitem{DeVV} S. G. Deepshikha, L. K. Vashisht and G. Verma, {\it On weaving fusion frames for Hilbert spaces}, 2017 International Conference on Sampling Theory and Applications (SampTA), Tallin, 381–385,(2017). doi: 10.1109/SAMPTA.2017.8024363.

\bibitem{[De]}  F. Deutsch, {\it The angle between subspaces in Hilbert
space}, Approximation theory, wavelets and applications, Netherlands,  107--130,(1995).

\bibitem{DH}  J. Ding and L. J. Huang, {\it Perturbation of generalized inverses of linear operators in Hilbert spaces}, J. Math.
Anal. Appl. 198, 505--516, (1996) .

\bibitem{LLHL} Li, D., Leng, J., Huang, T.,  Li, X.,{\it On Weaving g-Frames for Hilbert Spaces}, Complex Analysis and Operator Theory, 14(2),(2020). doi:10.1007/s11785-020-00991-7 

\bibitem{NA} F. A. Neyshaburi and A. A. Arefijamaal, {\it Weaving Hilbert space fusion frames}, preprint, arXiv: 1802.03352.


\end{thebibliography}
\end{document}